\numberwithin{equation}{section}
\newtheorem{theorem}{Theorem}
\newtheorem{lemma}[theorem]{Lemma}
\newtheorem{corollary}[theorem]{Corollary}
\newcommand{\field}[1]{\mathbb{#1}}
\newcommand{\la}{{\langle}}
\newcommand{\om}{{\omega}}
\newcommand{\Cr}{{C^*_r(G)}}
\newcommand{\Br}{{B_r(G)}}
\newcommand{\abs}[1]{|#1|}
\newcommand{\norm}[1]{\|#1\|}
\newcommand{\G}{\mathbb G}
\def\proclaim #1. #2\par{\medbreak
\noindent{\bf#1.\enspace}{\sl#2}\par\medbreak}
\title{On iterated powers of positive definite functions}
\author{Mehrdad Kalantar}
\address{ School of Mathematics and Statistics,
        Carleton University, Ottawa, Ontario, Canada K1S 5B6}
\email{mkalanta@math.carleton.ca}
\subjclass[2000]{Primary 22D99, 43A35; Secondary 22D20, 22D35.}
\begin{document}
\begin{abstract}
We prove that if $\rho$ is an adapted positive definite
function in the Fourier--Stieltjes algebra $B(G)$ of a locally compact group $G$ with $\|\rho\|_{B(G)}=1$,
then the iterated powers $(\rho^n)$ converge to zero
in the weak* topology $\sigma(B(G) , C^*(G))$.
Moreover, if $\rho$ is irreducible, we prove that $(\rho^n)$
as a sequence of u.c.p. maps on the group $C^*$-algebra converges to zero in the strong operator topology.
\end{abstract}


\maketitle




In this paper we prove dual versions of two fundamental limit theorems of convolution powers
of probability measures on locally compact groups:

\begin{theorem}\cite[Theorem 2]{Mukh} \and \cite[TH\'{E}OR\`{E}ME 8]{Derr} \label{Muk}
Let $\mu$ be a probability measure on a locally compact group $G$.
Suppose $G$ is not compact and $\mu$ is adapted (i.e. support of $\mu$ generates $G$ as a closed subgroup),
then the convolution powers $(\mu^n)$ converge to zero in the weak* topology $\sigma(M(G) , C_0(G))$.
\end{theorem}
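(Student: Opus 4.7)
The conclusion $\mu^n\to 0$ in the weak$^*$ topology $\sigma(M(G),C_0(G))$ is the statement $\int_G f\, d\mu^n\to 0$ for every $f\in C_0(G)$. Given such $f$ and $\varepsilon>0$, picking a compact $K\subset G$ with $|f(x)|<\varepsilon$ for $x\notin K$ gives
$$\Big|\int_G f\, d\mu^n\Big|\le \|f\|_\infty \,\mu^n(K)+\varepsilon,$$
so the theorem reduces to proving $\mu^n(K)\to 0$ for every compact $K$. I would actually aim at the stronger statement that the concentration function
$$c(K,\mu^n):=\sup_{x\in G}\mu^n(xK)$$
tends to $0$.

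\textbf{Handling the concentration function.} A direct computation using $\mu^{n+1}(xK)=\int_G \mu^n(y^{-1}xK)\, d\mu(y)$ gives $c(K,\mu^{n+1})\le c(K,\mu^n)$, so $c(K,\mu^n)\downarrow c(K)\in[0,1]$. Suppose for contradiction that $c(K)>0$ for some compact $K$, and pick $x_n\in G$ with $\mu^n(x_nK)\to c(K)$. The translated probabilities $\nu_n$ defined by $\nu_n(A):=\mu^n(x_n A)$ satisfy $\nu_n(K)\to c(K)>0$; by Banach--Alaoglu they have a weak$^*$ cluster point $\nu\in M(G)$ with $\nu(K)\ge c(K)$, in particular $\nu\neq 0$. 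Using the weak$^*$-continuity of left convolution by $\mu$ on $M(G)$ (for $f\in C_0(G)$, the function $y\mapsto \int f(xy)\, d\mu(x)$ again lies in $C_0(G)$) together with the extremality $\mu^n(y^{-1}x_nK)\to c(K)$ forced by $\mu$-a.e. $y$, one aims to conclude that $\nu$ is supported on a coset of a closed subgroup $H\subseteq G$ that is left-invariant under $\operatorname{supp}(\mu)$ and on which $\nu$ has finite mass. Finiteness of $\nu$ on $H$ together with the invariance then forces $H$ to be compact.

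\textbf{Main obstacle and conclusion.} The heart of the proof is this structural step: identifying, from the extremal cluster point $\nu$, an invariance subgroup $H$ which must be compact. The delicate point is that the shifts $x_n$ can a priori escape to infinity, and one must rule this out by combining the monotonicity of $c(K,\mu^n)$ with the convolution equation to ensure that the invariance $\operatorname{supp}(\mu)\cdot H\subseteq H$ passes to the limit while $\nu$ remains nontrivial. Once such a compact subgroup $H$ is produced, adaptedness of $\mu$ forces the closed subgroup generated by $\operatorname{supp}(\mu)$ to lie in $H$, so $G=H$ is compact; this contradicts the hypothesis of non-compactness of $G$, hence $c(K)=0$ for every compact $K$ and the theorem follows from the reduction of the first paragraph.
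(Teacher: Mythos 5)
The paper does not prove this statement at all: it is quoted from Mukherjea and Derriennic and used as a black box, so there is no internal proof to compare yours against. Judged on its own terms, your proposal has a fatal flaw at the point where you upgrade the target. The reduction in your first paragraph only requires $\mu^n(K)\to 0$ for every compact $K$, but the quantity you then set out to control, the concentration function $c(K,\mu^n)=\sup_{x\in G}\mu^n(xK)$, is strictly stronger, and its decay to zero is \emph{false} under adaptedness alone. Take $G=\mathbb{Z}$ and $\mu=\delta_1$: this measure is adapted (its support generates $\mathbb{Z}$ as a closed subgroup) but not irreducible (it generates only the semigroup $\mathbb{N}$), and $c(K,\mu^n)=\sup_x \delta_n(x+K)=1$ for all $n$, even though $\mu^n=\delta_n\to 0$ in $\sigma(M(\mathbb{Z}),C_0(\mathbb{Z}))$. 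Decay of the concentration function is precisely Theorem~2 of the paper (Hofmann--Mukherjea, Jaworski--Rosenblatt--Willis) and requires the strictly stronger hypothesis of irreducibility. So your strategy attempts to derive the weaker theorem from a stronger intermediate claim that the given hypotheses do not support; a correct proof must work with $\mu^n(K)$ itself and accommodate the possibility, realized in the example, that the mass of $\mu^n$ survives but translates off to infinity.

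Even leaving the hypotheses aside, the step you label the ``main obstacle'' --- extracting from the extremal cluster point $\nu$ a closed subgroup $H$ with $\operatorname{supp}(\mu)\cdot H\subseteq H$ carrying finite invariant mass, and deducing that $H$ is compact --- is the entire content of the concentration-function theorem. The paper itself emphasizes that the known proof of that step rests on deep structure theory of locally compact (in particular totally disconnected) groups. Stating it as something one ``aims to conclude'' leaves the hard part of the argument unproved, so the proposal is a plan for the wrong theorem rather than a proof of this one.
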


\begin{theorem}\cite[Corollary 4]{HM} \and \cite[Theorem 1.8]{JRW} \label{jrw}
Let $\mu$ be a probability measure on a locally compact group $G$.
Suppose $G$ is not compact and $\mu$ is irreducible (i.e. support of $\mu$ generates $G$ as a closed semigroup),
then $\|\mu^n\star f\|_\infty$ converges to zero for every $f \in C_0(G)$.
\end{theorem}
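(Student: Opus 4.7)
The plan is to proceed via the classical concentration function technique. For a Borel probability measure $\nu$ on $G$ and a compact set $K \subset G$, define
\[
\delta_K(\nu) = \sup_{x \in G} \nu(xK).
\]

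First I would reduce the theorem to showing that $\delta_K(\mu^n) \to 0$ for every compact $K \subset G$. Given $f \in C_0(G)$ and $\epsilon > 0$, pick a compact $K$ with $|f| < \epsilon$ outside $K$. Splitting $(\mu^n \star f)(x) = \int f(y^{-1}x) \, d\mu^n(y)$ over the sets $\{y : y^{-1}x \in K\}$ and its complement yields
\[
\|\mu^n \star f\|_\infty \leq \|f\|_\infty \, \delta_{K^{-1}}(\mu^n) + \epsilon,
\]
so the theorem follows once the concentration functions are shown to tend to zero.

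Next I would observe that convolution by $\mu$ is contractive for the concentration function, $\delta_K(\mu \star \nu) \leq \delta_K(\nu)$, via a direct Fubini computation. Hence $(\delta_K(\mu^n))_n$ is non-increasing and converges to some $\alpha \geq 0$. To rule out $\alpha > 0$, assume the contrary and pick $x_n \in G$ with $\mu^n(x_n K) \geq \alpha/2$; the translated measures $\nu_n := x_n^{-1} \cdot \mu^n$ then carry mass at least $\alpha/2$ on $K$. Extracting a vaguely convergent subsequence produces a non-zero positive Radon measure $\nu$ on $G$. By passing to the limit in the identity $\mu \star \mu^n = \mu^{n+1}$ (suitably translated, and using the monotonicity of $\delta_K$ to prevent mass escape at infinity), $\nu$ becomes $\mu$-harmonic, i.e., $\mu \star \nu = \nu$. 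Iterating this equation and invoking the hypothesis that $\operatorname{supp}(\mu)$ generates $G$ as a closed semigroup promotes it to full $G$-invariance of $\nu$. Since a non-zero finite $G$-invariant Radon measure exists on $G$ only when $G$ is compact, this contradicts the standing assumption.

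The main obstacle is the limit argument producing the invariant measure $\nu$: one must choose the translating elements $x_n$ so that the translates $\nu_n$ remain tight (do not lose mass to infinity) \emph{and} so that the convolution identity survives the vague limit. This is precisely where the monotonicity of the concentration function is decisive, since it supplies the uniform lower bound on $\nu_n(K)$ needed to extract a non-trivial vague cluster point and to transfer $\mu$-harmonicity to the limit.
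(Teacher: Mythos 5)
This statement is not proved in the paper at all: it is quoted, with attribution to Hofmann--Mukherjea and to Jaworski--Rosenblatt--Willis, precisely because its proof is deep. The paper even warns that the proofs of Theorems \ref{Muk} and \ref{jrw}, ``specially Theorem \ref{jrw}, are based on some very deep results on the structure theory of locally compact groups.'' Your sketch is essentially the classical Hofmann--Mukherjea concentration-function strategy, and its first two steps are fine: the reduction of $\|\mu^n\star f\|_\infty\to 0$ to $\delta_K(\mu^n)\to 0$, and the monotonicity $\delta_K(\mu\star\nu)\le\delta_K(\nu)$, are both correct and standard. The gap is exactly where you flag ``the main obstacle,'' and it is not a technicality that the monotonicity of $\delta_K$ resolves; it is the heart of the problem.

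Concretely, two steps fail as stated. First, the recentering: you choose $x_n$ with $\mu^n(x_nK)\ge\alpha/2$ and set $\nu_n=\delta_{x_n^{-1}}\star\mu^n$, but the identity $\mu^{n+1}=\mu\star\mu^n$ becomes $\nu_{n+1}=\delta_{x_{n+1}^{-1}}\star\mu\star\mu^n$, and left translation by $x_{n+1}^{-1}$ does not commute past convolution by $\mu$; to compare this with $\mu\star\nu_n$ (or with any recentering of $\mu^n$ that still has mass $\ge\alpha/2$ on a fixed compact set) you need the increments $x_n^{-1}x_{n+1}$ to stay in a compact set. Nothing in the soft setup forces this, and controlling these recentering elements is precisely what requires the structure theory: Hofmann--Mukherjea could do it only for a restricted class of groups, and the general case waited fifteen years for Jaworski--Rosenblatt--Willis, who needed Willis's theory of tidy subgroups and the scale function for totally disconnected groups. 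Second, even granting a non-zero finite positive $\nu$ with $\mu\star\nu=\nu$, promoting this to full left $G$-invariance is a Choquet--Deny-type assertion; iterating $\mu^n\star\nu=\nu$ and invoking that $\mathrm{supp}(\mu)$ generates $G$ as a closed semigroup gives only that $\nu$ is an \emph{average} of its translates over $\mathrm{supp}(\mu^n)$, not that each translate equals $\nu$. To conclude equality one needs a maximum principle (that $\nu$ realizes $\sup_y\nu(yE)$ for suitable $E$), which is again the recentering problem in disguise. So the proposal is a correct outline of the strategy that motivates the theorem but does not constitute a proof; for the purposes of this paper the statement should simply be used as the cited black box.
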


These results have a number of important consequences in the study of harmonic functions and boundaries of random 
walks on locally compact groups.

The latter, also known as the {\it concentration function problem} for locally compact groups
was first considered by Hofmann and Mukherjea in \cite{HM}, where they proved the result
for a large class of locally compact groups, but left open the general case.
It was then in \cite{JRW}, where Jaworski, Rosenblatt, and Willis used the developments
in the theory of totally disconnected groups to settle the problem in the general
locally compact groups case.

A dual version of the theory of random walks on groups, harmonic functions, and measure-theoretic boundaries has been
developed by Biane \cite{Biane} and Chu--Lau \cite{Chu-Lau} (see also \cite{I}).
The noncommutative versions of the above theorems have attracted interests of many in the area.

But the proofs of the above theorems, specially Theorem \ref{jrw},
are based on some very deep results on the structure theory of
locally compact groups that cannot be modified in the noncommutative world!
So, one has to provide new arguments in other settings.

Here we prove these results in the dual setting,
i.e. for positive definite functions in the Fourier--Stieltjes algebra $B(G)$ of $G$.

Note that the author has proved a discrete quantum group version of Theorem \ref{jrw} in \cite{K-JFA}.
Of course, the dual of a locally compact group is a locally compact quantum group,
but the proof of \cite{K-JFA} which uses Banach limits and a non-commutative 0-2 law
cannot be generalized to general (quantum) probability measures
on non-discrete locally compact quantum groups.

So, we present a different proof for the co-commutative case, i.e. duals of locally compact groups.

Recall that for a locally compact group $G$ the set $B(G)$ of all matrix coefficient functions
of continuous unitary representations of $G$ forms a subalgebra of bounded
continuous functions on $G$. It admits a norm $\|\cdot\|_{B(G)}$ with which it turns to a
Banach algebra called the Fourier--Stieltjes algebra, which is isomorphic to the dual Banach space of $C^*(G)$
(the universal group $C^*$-algebra of $G$).

For abelian $G$ with (Pontryagin) dual group $\hat G$, the 
Fourier--Stieltjes transform $\mathcal{F}_s$ yields an isomorphism
of the dual Banach algebras $B(G) \cong M(\hat G)$,
where $(M(\hat G) , \star)$ is the measure algebra (with convolution product) of the dual group $\hat G$.

We denote by $P_1(G)$ the set of positive definite functions of norm one. So,
$\rho\in P_1(G)$ means that there
exists a continuous unitary representation $\pi$ of $G$ on a Hilbert space $H_\pi$,
and a unit vector $\xi_\rho \in H_\pi$ such that
\begin{equation}\label{002}
\rho(r)\, =\, \langle\, \pi(r)\, \xi_\rho\, ,\, \xi_\rho \, \rangle \hspace{2cm}  (r\,\in\, G)\,.
\end{equation}

Following \cite{NR}, $\rho$ is said to be {\it adapted} if $G_\rho := \rho^{-1}(\{1\}) = \{e\}$.

\begin{theorem}\label{w*}
Let $G$ be a non-discrete locally compact group, and let $\rho\in P_1(G)$ be adapted.
Then $(\rho^n)$ converge to zero in the weak* topology $\sigma(B(G) , C^*(G))$.
\end{theorem}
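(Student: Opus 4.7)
The plan is to argue by contradiction, using the semigroup structure of $\{\rho^n\}$ inside $B(G)$ to reduce the only problematic case to an application of Theorem~\ref{Muk} on an abelian Pontryagin dual. Form $S:=\overline{\{\rho^n:n\ge 1\}}^{\,w^*}\subseteq B(G)$; since the closed unit ball of $B(G)$ is weak*-compact and pointwise multiplication on $B(G)$ is separately weak*-continuous (as the transpose of the comultiplication of $C^*(G)$), $S$ is a compact monothetic abelian topological semigroup. Its minimal ideal $M$ is then a compact topological group with identity an idempotent $p\in B(G)$, and standard compact-semigroup theory identifies the $\omega$-limit set of $(\rho^n)$ with $M$. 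Hence the theorem reduces to showing that $p=0$.

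Suppose for contradiction $p\ne 0$. Since $p$ is a positive definite function satisfying $p(r)^2=p(r)$, one has $p(r)\in\{0,1\}$ for every $r$; the GNS realisation $p(r)=\langle\pi(r)\xi,\xi\rangle$ identifies $H:=p^{-1}(\{1\})$ as the stabiliser of $\xi$ in $\pi$, so $H$ is a closed subgroup, and continuity of $p$ into the discrete target $\{0,1\}$ makes $H$ also open. Thus $p=\chi_H$ for a clopen subgroup $H$, and because $G$ is non-discrete so too is $H$. Next, $\rho\cdot p\in M$ has an inverse $\tau\in M$, and pointwise the relation $(\rho\cdot p)\cdot\tau=p$ reads $\rho(r)\tau(r)=1$ for $r\in H$. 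Both $\rho$ and $\tau$ are positive definite of $B(G)$-norm at most one ($\tau$ because weak* limits of positive definite functions of norm $1$ remain positive definite of norm at most $1$), so $|\rho|,|\tau|\le 1$ on $G$, which forces $|\rho(r)|=1$ throughout $H$. The Cauchy--Schwarz equality case in the GNS representation of $\rho$ then identifies $\rho|_H$ as a continuous character $H\to\mathbb T$, whose kernel is $H\cap G_\rho=\{e\}$ by adaptedness; hence $\rho|_H$ is an injective character and $H$ is abelian.

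The contradiction now arises from two incompatible weak* limits in $B(H)$. Along a subnet $\rho^{n_\alpha}\to p=\chi_H$ weak* in $B(G)$, the extension-by-zero embedding $L^1(H)\hookrightarrow L^1(G)\subseteq C^*(G)$ (available because $H$ is open), together with the uniform bound $\|\rho^{n_\alpha}|_H\|_{B(H)}\le 1$ and the density of $L^1(H)$ in $C^*(H)$, yields $\rho^{n_\alpha}|_H\to 1_H$ in $\sigma(B(H),C^*(H))$. On the other hand, under the Fourier--Stieltjes isomorphism $B(H)\cong M(\hat H)$ the character $\rho|_H$ corresponds to the Dirac mass $\delta_{\rho|_H}$ on $\hat H$; by Pontryagin duality the triviality of $\ker(\rho|_H)$ translates to the density of $\langle\rho|_H\rangle$ in $\hat H$, and non-discreteness of $H$ makes $\hat H$ non-compact, so $\delta_{\rho|_H}$ is an adapted probability measure on the non-compact group $\hat H$. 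Theorem~\ref{Muk} then forces $\delta_{\rho|_H}^n\to 0$ weak* in $M(\hat H)$, i.e.\ $\rho^n|_H\to 0$ weak* in $B(H)$, contradicting the limit $1_H\ne 0$ of the subnet $\rho^{n_\alpha}|_H$. I expect the most delicate step to be the deduction $|\rho|\equiv 1$ on $H$ from the group structure of $M$; once this is in hand, the rest is bookkeeping with Pontryagin--Fourier--Stieltjes duality and the transfer of weak* convergence to a clopen subgroup.
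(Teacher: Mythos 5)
Your overall route is the same as the paper's in its essential content: locate an open subgroup on which $|\rho|\equiv 1$, observe that adaptedness makes $\rho$ there an injective character (so the subgroup is abelian and non-discrete), transfer via the Fourier--Stieltjes isomorphism to an adapted probability measure on the non-compact dual, and invoke Theorem~\ref{Muk}. That part of your argument --- the analysis of the idempotent $p=\chi_H$, the inverse $\tau$ in the kernel forcing $|\rho|=1$ on $H$, and the final contradiction between $\rho^{n_\alpha}|_H\to 1_H$ and $\rho^{n}|_H\to 0$ --- is correct and is genuinely a clean way to see why a nonzero \emph{idempotent} cluster point cannot exist.

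The gap is in the very first reduction: ``standard compact-semigroup theory identifies the $\omega$-limit set of $(\rho^n)$ with $M$,'' hence ``the theorem reduces to showing $p=0$.'' Multiplication on bounded subsets of $B(G)$ is only \emph{separately} weak*-continuous (and for non-discrete $G$ it is genuinely not jointly continuous --- already in the abelian picture $\delta_{x_n}\star\delta_{-x_n}=\delta_0$ while $\delta_{\pm x_n}\to 0$). For a compact \emph{semitopological} monothetic semigroup the set $L$ of cluster points of $(\rho^n)$ is a closed ideal of $S$, so one gets $M\subseteq L$ for free, but the containment you need is $L\subseteq M$, and that is a jointly-continuous phenomenon which fails in general: in the weakly almost periodic compactification of $(\mathbb{N},+)$, a function such as the indicator of $\{2^k:k\ge 1\}$ is WAP, vanishes identically on the kernel, yet equals $1$ at suitable cluster points of the sequence, so the cluster set strictly contains the kernel. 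Consequently, proving $p=0$ eliminates only the kernel, not an arbitrary nonzero (typically non-idempotent) weak* cluster point $\nu_0$ of $(\rho^n)$, and your argument gives no handle on such a $\nu_0$: there is no reason its support should lie in $H$, so you cannot restrict to $H$ and run the contradiction. The paper closes exactly this hole by a direct construction: starting from an arbitrary nonzero positive cluster point $\nu_0=\lim_i\rho^{n_i^0}$ it extracts nested subnets to build a backward orbit $\nu_k\ge 0$ with $\rho^m\nu_{k+m}=\nu_k$, whence $|\nu_k(r)|\le|\rho(r)|^m\to 0$ off $G_{\bar\rho}=\rho^{-1}(\mathbb{T})$; this forces $\mathrm{supp}(\nu_k)\subseteq G_{\bar\rho}$, continuity of $\nu_k$ makes $G_{\bar\rho}$ open, and the same Pontryagin transfer then kills $\nu_0$ itself. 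If you replace the kernel-idempotent reduction by such a backward-orbit (or divisibility) argument applied to an arbitrary cluster point, the rest of your proof goes through.
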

\begin{proof}
Let $G_{\bar\rho} = \rho^{-1}(\mathbb T)$, where $\mathbb T$ is the unit circle in the complex plane.
Then it is easily seen from the representation (\ref{002})
that $G_{\bar\rho}$ is a closed subgroup of $G$, and the restriction
of $\rho$ to $G_{\bar\rho}$ is a group homomorphism. And since $\rho$ is adapted, this restriction
is in fact injective.

Now, towards a contradiction, suppose that there exists a subnet
$\left(\rho^{n_i^0}\right)_{i\in I_0}$ of $\left(\rho^n\right)$
that converges weak* to a non-zero $\nu_0 \in B(G)^+$. Then for each $k\in \mathbb N$, find inductively a subnet
$\left(\rho^{n_i^k}\right)_{i\in I_k}$ of $\left(\rho^{n_i^{k-1}}\right)_{i\in I_{k-1}}$ such that
$\left(\rho^{n_i^k - k}\right)_{i\in I_k}$ converges. Denote
\[
\nu_k \,:=\, \lim_i\,\rho^{n_i^k - k}\, \in\, B(G)^+\,.
\]
Thus, by construction, we see that
\begin{equation}\label{001}
\rho^m \nu_{k+m-1}\, =\, \nu_{k-1} \ \ \ \ \ \ \ \ k,\,m\,\in\,\mathbb{N}
\end{equation}
which implies that $0 < \|\nu_0\|_{B(G)} = \|\nu_k\|_{B(G)} \leq 1$, and support$(\nu_{k-1})$ $\subseteq$
support$(\rho)$ $\cap$ support$(\nu_{k+m-1})$ for all $k, m\in \mathbb N$.
Moreover, since $\|\rho\|_\infty = 1$ it follows from (\ref{001}) that
\[
|\nu_k(r)| = |\rho(r)|^m\, |\nu_{k+m}(r)| \leq |\rho(r)|^m \longrightarrow 0
\]
for all $r\in G - G_{\bar\rho}$.
Hence, support$(\nu_k)$ $\subseteq$ $\G_{\bar\rho}$ for all $k\geq 0$.
From continuity of $\nu_k$'s we conclude that $G_{\bar\rho}$ is open in $G$.
So, we have a canonical identification
\[
B(G_{\bar\rho})\, \cong \, \{\, \mathds{1}_{G_{\bar\rho}}\,\cdot\, \om\, :\, \om\, \in\, B(G)\,\}\,,
\]
where $\mathds{1}_{G_{\bar\rho}} \in B(G)$ is the characteristic function of $G_{\bar\rho}$ (cf. \cite{Ey}).
Through this identification, the weak* topology $\sigma(B(G_{\bar\rho}) , C^*(G_{\bar\rho}))$ coincides with
the restriction of the $\sigma(B(G) , C^*(G))$-topology.
Now, since $G_{\bar\rho}$ is abelian, the Fourier--Stieltjes transform $\mathcal{F}_s$ induces the identification
$B(G_{\bar\rho}) \cong M(\widehat{G_{\bar\rho}})$.
Then $\mathcal{F}_s({\mathds{1}_{G_{\bar\rho}}\, \rho}) \in M(\widehat{G_{\bar\rho}})$
is an adapted probability measure \cite[Proposition 2.1]{NR}. Moreover, as an open subgroup
of a non-discrete group, $G_{\bar\rho}$ is not discrete, i.e. $\widehat{G_{\bar\rho}}$
is not compact. Hence, by Theorem \ref{Muk} the convolution powers
$\big(\mathcal{F}_s({\mathds{1}_{G_{\bar\rho}}\, \rho})\big)^{\star n}$
converge to 0 in the weak* topology of $M(\widehat{G_{\bar\rho}})$. This gives
\[
\mathds{1}_{G_{\bar\rho}}\, \rho^n\, =\,
\mathcal{F}_s^{-1}\big(\mathcal{F}_s({\mathds{1}_{G_{\bar\rho}}\, \rho})^{\star n}\big)\,\, {\longrightarrow}\,\,0
\]
in the $\sigma(C^*(G) , B(G))-$topology. Consequently, we have
\[
\nu_0 \,=\, \mathds{1}_{G_{\bar\rho}}\, \nu_0 \,=\, \mathds{1}_{G_{\bar\rho}} \,\lim_i\,\rho^{n_i^0}
\,=\, \lim_i\, \mathds{1}_{G_{\bar\rho}} \,\rho^{n_i^0} \, =\,0
\]
which is a contradiction.
\end{proof}

\noindent
{\bf Note.} After the completion of this work it was pointed out to the author that Theorem \ref{w*} also follows from a more
general result proved in \cite[Theorem 5.3]{Kan-Ulg}.\\

Since in the abelian case the reduced $C^*$-algebra $\Cr$ coincides
with $C^*(G)$, one may consider $\Br = \Cr^*$ as the dual object to the measure algebra $M(G)$.
So, using the fact that $\Cr$ is a quotient $C^*$-algebra of $C^*(G)$, and that $\Br$ is
a subalgebra of $B(G)$ we derive the following.

\begin{corollary}
Let $G$ be a non-discrete locally compact group, and let $\rho\in B_r(G)$ be an adapted positive definite function on $G$.
Then the iterated powers $(\rho^n)$ converge to zero in the weak* topology $\sigma(B_r(G) , C^*_r(G))$.
\end{corollary}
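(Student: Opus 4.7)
The plan is to deduce the corollary directly from Theorem \ref{w*} by exploiting the duality between the canonical surjection $q \colon C^*(G) \twoheadrightarrow \Cr$ and the inclusion $\Br \hookrightarrow B(G)$. The key point is that since $q$ is surjective, its Banach-space adjoint $q^* \colon \Br \to B(G)$ is an isometric embedding, and in fact agrees with the identification of $\Br$ as a (closed) subalgebra of $B(G)$.

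First I would check that the hypotheses of Theorem \ref{w*} are satisfied for $\rho$ viewed as an element of $B(G)$. Since $\rho$ is positive definite and adapted, the condition $G_\rho = \{e\}$ forces $\rho(e) = 1$, and then $\|\rho\|_{\Br} = \|\rho\|_{B(G)} = \rho(e) = 1$ by the standard formula for the Fourier--Stieltjes norm of a positive definite function. Hence $\rho \in P_1(G)$ and is adapted in the sense of Theorem \ref{w*}. Applying that theorem yields $\rho^n \to 0$ in $\sigma(B(G),\, C^*(G))$.

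Next I would transfer this convergence to the weak* topology on $\Br$. Given $a \in \Cr$, lift $a$ to some $x \in C^*(G)$ with $q(x) = a$. Then, using that $q^*$ coincides with the inclusion $\Br \hookrightarrow B(G)$,
\[
\langle a,\, \rho^n \rangle_{\Cr,\, \Br}
\;=\; \langle q(x),\, \rho^n \rangle
\;=\; \langle x,\, q^*(\rho^n) \rangle
\;=\; \langle x,\, \rho^n \rangle_{C^*(G),\, B(G)}\,,
\]
which tends to $0$ by the previous step. Since $a \in \Cr$ was arbitrary, this is the desired weak* convergence $\rho^n \to 0$ in $\sigma(\Br,\, \Cr)$.

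Since the argument is a purely formal transfer through the dual pair $(q, q^*)$ together with Theorem \ref{w*}, there is essentially no obstacle. The only point worth pausing over is the verification that the inclusion $\Br \subseteq B(G)$ really agrees with $q^*$, but this is a standard and well-documented feature of the Fourier--Stieltjes algebra and the reduced group $C^*$-algebra.
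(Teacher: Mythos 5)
Your proposal is correct and matches the paper's intended derivation: the paper states the corollary with exactly this justification, namely that $C^*_r(G)$ is a quotient of $C^*(G)$ and $B_r(G)$ is a subalgebra of $B(G)$, so the weak* convergence from Theorem \ref{w*} transfers through the adjoint of the quotient map. Your added verification that adaptedness forces $\rho(e)=1$ and hence $\rho\in P_1(G)$ is a worthwhile detail the paper leaves implicit.
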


Next, we prove a stronger limit theorem for irreducible positive definite functions.
$\rho\in P_1(G)$ is \emph{irreducible}
if for every non-zero positive $x\in  C(G)^*$
there exists $n\in\field{N}$ such that
$\langle\, x\, ,\, \mu^n\,\rangle\, \neq\, 0$.

In the case of an abelian group $G$, this class consists of the probability measures on the dual group $\hat G$
that the smallest closed semigroup containing their support is $\hat G$.

\begin{lemma}
Every irreducible positive definite function is adapted.
\end{lemma}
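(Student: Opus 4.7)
The plan is to argue by contrapositive: if $\rho$ is not adapted then there is a non-zero positive $x \in C^*(G)$ on which every $\rho^n$ vanishes, contradicting irreducibility.

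Assume $G_\rho \neq \{e\}$ and pick $g \in G_\rho$ with $g \neq e$. Writing $\rho$ through a representation $(\pi,\xi_\rho)$ as in (\ref{002}) with $\|\xi_\rho\|=1$, the equality $\rho(g)=1$ is exactly the Cauchy--Schwarz equality case, so $\pi(g)\xi_\rho=\xi_\rho$; consequently $\pi(g^{-1})\xi_\rho=\xi_\rho$, and tensoring yields
\[
\pi(g)^{\otimes n}\xi_\rho^{\otimes n} \,=\, \pi(g^{-1})^{\otimes n}\xi_\rho^{\otimes n} \,=\, \xi_\rho^{\otimes n} \qquad (n\in \mathbb{N}).
\]
This is the only identity I will need from the non-adapted hypothesis, and it is what makes the tensor-power description $\rho^n(r)=\langle \pi(r)^{\otimes n}\xi_\rho^{\otimes n},\xi_\rho^{\otimes n}\rangle$ degenerate at $g$.

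Now let $\lambda_g$ denote the image of $g$ under the canonical embedding $G\hookrightarrow M(C^*(G))$ into the multiplier algebra; since this embedding is injective on the Hausdorff group $G$, one has $\lambda_g\neq 1$. Because $C^*(G)$ is a closed two-sided ideal in $M(C^*(G))$ with an approximate identity, $(1-\lambda_g)$ cannot annihilate $C^*(G)$ on the right, so I pick $e_0\in C^*(G)$ with $a:=(1-\lambda_g)e_0\neq 0$ and put $x:=aa^*\in C^*(G)^+\setminus\{0\}$. Extending $\pi^{\otimes n}$ to $M(C^*(G))$ sends $\lambda_g$ to $\pi(g)^{\otimes n}$, so the identity above gives
\[
\pi^{\otimes n}(a^*)\xi_\rho^{\otimes n} \,=\, \pi^{\otimes n}(e_0^*)\bigl(\xi_\rho^{\otimes n}-\pi(g^{-1})^{\otimes n}\xi_\rho^{\otimes n}\bigr) \,=\, 0,
\]
and therefore $\langle x,\rho^n\rangle = \|\pi^{\otimes n}(a^*)\xi_\rho^{\otimes n}\|^2=0$ for every $n\in\mathbb{N}$. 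This contradicts the irreducibility of $\rho$.

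The only point requiring care is that $\lambda_g$ lives in $M(C^*(G))$ rather than in $C^*(G)$ itself when $G$ is non-discrete; but the ideal property $C^*(G)\triangleleft M(C^*(G))$ both keeps $x$ inside $C^*(G)$ and allows us to arrange $x\neq 0$, so no deeper structural result is needed.
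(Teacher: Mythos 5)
Your argument is correct, and it is genuinely different from the paper's: the paper disposes of this lemma in one line by citing \cite[Proposition 3.2.10]{Chu-Lau} together with \cite[Theorem 3.6]{KNR}, i.e.\ it routes the claim through known characterizations of adaptedness and irreducibility (essentially via support projections in the enveloping von Neumann algebra), whereas you give a self-contained contrapositive construction. Your key steps all check out: the Cauchy--Schwarz equality case gives $\pi(g)\xi_\rho=\xi_\rho$ for $g\in G_\rho$; the tensor-power realization $\rho^n(r)=\langle\pi(r)^{\otimes n}\xi_\rho^{\otimes n},\xi_\rho^{\otimes n}\rangle$ is the right way to handle all powers at once; Gelfand--Raikov guarantees $\lambda_g\neq 1$ in $M(C^*(G))$; and the ideal property plus faithfulness of the left-multiplication action of $M(C^*(G))$ on $C^*(G)$ produces a non-zero positive $x=aa^*\in C^*(G)$ with $\langle x,\rho^n\rangle=\|\pi^{\otimes n}(a^*)\xi_\rho^{\otimes n}\|^2=0$ for all $n$. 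One interpretive remark: the paper's displayed definition of irreducibility contains typos (``$x\in C(G)^*$'' and ``$\mu^n$'' for ``$x\in C^*(G)$'' and ``$\rho^n$''), and your reading --- non-zero positive elements of $C^*(G)$ tested against the functional $\rho^n\in B(G)$ --- is the natural dual of the measure-theoretic notion; since your witness $x$ lies in $C^*(G)$ itself, the argument survives even if the test elements were meant to range over the larger positive cone of $C^*(G)^{**}$. What you gain is independence from the two cited references; what the citations buy the paper is brevity and consistency with the support-projection formalism used elsewhere in \cite{KNR}.
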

\begin{proof}
This follows from \cite[Proposition 3.2.10]{Chu-Lau} and \cite[Theorem 3.6]{KNR}.
\end{proof}

\begin{lemma}\label{der}
Suppose that $\mathfrak{A}$ is a $C^*$-algebra, and $T$ is a positive contraction on $\mathfrak{A}$.
Then for any $0 \leq x \in \mathfrak{A}$ we have
\[
\lim_n\, \norm{\,T^n x\,}\,=\, \sup\,\{\,|\,\langle\,x\,,\,\nu\,\rangle\,| \,:\, \nu\,\in \mathcal{D}_1\,\}
\]
where $\mathcal{D}_1$ denotes the set of all $0 \leq \nu\in \mathfrak{A}^*$ such that there exists
a sequence $\displaystyle \{\nu_n\}_{n=0}^\infty$ of positive elements of
$\mathfrak{A}^*$ such that $\nu_0 = \nu$, $\|\nu_n\| \leq 1$
for all $n\geq 0$, and
\begin{equation}\label{22}
T^*\,\nu_{n+1}\,=\,\nu_n \hspace{2cm} n\geq 0\,.
\end{equation}
\end{lemma}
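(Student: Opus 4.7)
The sequence $(\|T^n x\|)$ is decreasing since $T$ is a contraction: $\|T^{n+1}x\| \leq \|T\|\cdot\|T^n x\| \leq \|T^n x\|$. Hence $L := \lim_n \|T^n x\| = \inf_n \|T^n x\|$ exists, and the plan is to prove the two inequalities $\sup \leq L$ and $\sup \geq L$ separately. For the easier direction, given $\nu \in \mathcal{D}_1$ with associated sequence $(\nu_n)$, iterating (\ref{22}) yields $\nu = \nu_0 = (T^*)^n \nu_n$ for every $n$, so
\[
|\langle x, \nu\rangle| = |\langle T^n x, \nu_n\rangle| \leq \|T^n x\|\cdot\|\nu_n\| \leq \|T^n x\|;
\]
letting $n \to \infty$ gives $|\langle x, \nu\rangle| \leq L$.

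For the reverse inequality I would, for each $N$, build a finite tower of positive functionals coherent under $T^*$ and then pass to a limit. Since $T$ is positive, $T^N x$ is positive in $\mathfrak{A}$, so its norm is attained on the weak* compact set $K := \{\phi \in \mathfrak{A}^* : \phi \geq 0,\ \|\phi\| \leq 1\}$ by some $\phi_N \in K$: $\langle T^N x, \phi_N\rangle = \|T^N x\|$. Set $\nu_k^{(N)} := (T^*)^{N-k}\phi_N$ for $0 \leq k \leq N$ and $\nu_k^{(N)} := 0$ for $k > N$. Since $T^*$ is a positive contraction on $\mathfrak{A}^*$, each $\nu_k^{(N)} \in K$; moreover $T^* \nu_{k+1}^{(N)} = \nu_k^{(N)}$ for $0 \leq k < N$ and $\langle x, \nu_0^{(N)}\rangle = \|T^N x\|$.

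To extract a coherent limit in $k$, the plan is to invoke Tychonoff on the compact product $X := \prod_{k=0}^\infty K$ (each factor carrying the weak* topology). The tuples $\Phi_N := (\nu_k^{(N)})_{k\geq 0} \in X$ admit a subnet $(\Phi_{N_i})$ converging coordinate-wise to some $(\nu_k)_{k \geq 0}$. For each fixed $k$, eventually $N_i \geq k+1$, so weak* continuity of $T^*$ (automatic since $T$ is bounded) passes $\nu_k^{(N_i)} = T^* \nu_{k+1}^{(N_i)}$ to $\nu_k = T^* \nu_{k+1}$, while positivity and $\|\nu_k\| \leq 1$ are preserved under weak* limits. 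Finally, $\langle x, \nu_0\rangle = \lim_i \langle x, \nu_0^{(N_i)}\rangle = \lim_i \|T^{N_i} x\| = L$, so $\nu_0 \in \mathcal{D}_1$ realizes the supremum, giving $\sup \geq L$.

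The main obstacle is the simultaneous diagonal extraction across all $k$: without metrizability of the weak* topology on $\mathfrak{A}^*$, one cannot simply iterate countable subsequence selections, and the product/Tychonoff framing is the cleanest way around this. Apart from it, every verification (positivity and norm bound preserved under weak* limits, existence of a norm-attaining positive functional on a positive $C^*$-algebra element, and weak* continuity of the adjoint of a bounded operator) is standard.
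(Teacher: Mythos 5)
Your proof is correct, and it follows the same basic strategy as the paper --- the easy inequality via $(T^*)^n\nu_n = \nu_0$ and the pairing estimate, and the hard inequality by using weak* compactness of the positive part of the unit ball of $\mathfrak{A}^*$ to manufacture a coherent tower $(\nu_k)$ with $T^*\nu_{k+1}=\nu_k$ --- but the execution differs in two ways worth noting. First, the paper only chooses $\omega_n$ with $\|T^nx\| < |\langle T^nx,\omega_n\rangle| + \varepsilon$ and so ends with $L \leq |\langle x,\nu_0\rangle| + \varepsilon$, needing an $\varepsilon\to 0$ step; you instead exploit positivity of $T^Nx$ to pick an exactly norm-attaining $\phi_N$ in the weak* compact quasi-state space, so your limiting $\nu_0$ realizes the supremum on the nose. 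Second, for the simultaneous extraction across all levels $k$, the paper builds the tower by an inductive refinement of subnets (taking a cluster point of $\{(T^*)^{n_i-1}\omega_{n_i}\}$, then of $\{(T^*)^{n_i-2}\omega_{n_i}\}$ along a further subnet, and so on --- in fact it runs this construction twice, first producing auxiliary $\eta_n$'s and then the $\nu_n$'s), which works because each relation $T^*\eta_{k+1}=\eta_k$ only couples consecutive levels; your single application of Tychonoff to $\prod_k K$ with coordinatewise weak* convergence accomplishes the same thing in one step and is arguably cleaner and easier to audit. All the ingredients you flag as standard (weak*-weak* continuity of $T^*$, preservation of positivity and the norm bound under weak* limits, norm attainment for positive elements on the quasi-state space) are indeed standard, so there is no gap.
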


\begin{proof}
The ``$\geq$'' part of the equation follows from $(T^*)^n \nu_{n} = \nu_0$.
For the reverse inequality, let $0\leq x\in\mathfrak{A}$, and $\varepsilon > 0$.
Choose $0\leq \om_n \in \mathfrak{A}^*$ with $\norm{\om_n} \leq 1$
such that $\norm{\,T^n x\,}\, < \, |\,\langle\,T^n x\,,\,\om_n\,\rangle\,|\, +\, \varepsilon$ for all $n$.
Then, let $\nu_0$ be a weak* cluster point of $\{\,(T^*)^{n}\om_n\,:\,n\geq 1\,\}$ in $\mathfrak{A}^*$.
So, we may find a subnet $(T^*)^{n_i}\om_{n_i}$ that converges weak* to $\nu_0$,
and therefore
\[
\lim_n\, \norm{\,T^n\,x\,}\,=\, \lim_{n_i}\, \norm{\,T^{n_i}\,x\,}\,\leq\,
\lim_{n_i}\,|\,\langle\, x\,,\,(T^*)^{n_i}\om_{n_i}\,\rangle\,|\, +\, \varepsilon
\,=\, |\,\langle\,x\,,\,\nu_0\,\rangle\,|\, +\, \varepsilon\,.
\]
Now, let $0\leq \eta_1$ be a weak* cluster point of
$\{\,(T^*)^{n_i - 1}\om_{n_i}\,\}$ in $\mathfrak{A}^*$.
Then $\norm{\eta_1}\leq 1$ and $T^* \eta_1 = \nu_0$. Continuing this way, by induction
we can construct a sequence $\{\eta_n\}$ of positive elements of
$\mathfrak{A}^*$ such that $\|\eta_n\| \leq 1$,
and $T^* \eta_{n+1} = \eta_n$ for all $n$. So, we have in particular $(T^*)^n \eta_{n} = \nu_0$.
Now, let $\displaystyle \nu_1 = \lim_{n_j} \, (T^*)^{n_j - 1}\eta_{n_j}$ be a weak* cluster point of
$\{\,(T^*)^{n-1}\eta_n\,:\,n\geq 2\,\}$ in $\mathfrak{A}^*$.
Then $\norm{\nu_1}\leq 1$ and $T^* \nu_1 = \nu_0$. And, if we choose a weak* cluster point $\nu_2$
of $\{\,(T^*)^{n_j-2}\eta_{n_j}\,\}$, then $0\leq \nu_2$, $\norm{\nu_2}\leq 1$, and $T^* \nu_2 = \nu_1$.
Similarly, we can now construct a sequence $\{\nu_n\}$ of positive elements of
$\mathfrak{A}^*$ with $\|\nu_n\| \leq 1$
for all $n\geq 0$ that satisfies (\ref{22}).
\end{proof}

In the following, we denote by $\rho \cdot x$ the canonical action
of elements $\rho$ in $B(G)$ on elements $x$ in $C^*(G) \cong B(G)_*$. 
\begin{theorem}
Let $G$ be a non-discrete locally compact group, and let $\rho\in P_1(G)$ be irreducible.
Then for every $x\in C^*(G)$ we have
\[
\lim_n\, \|\,\rho^n \cdot x\,\|\,=\,0\,.
\]
\end{theorem}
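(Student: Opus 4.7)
The plan is to apply Lemma~\ref{der} to the positive contraction $T : x \mapsto \rho \cdot x$ on $C^*(G)$, thereby reducing the problem to showing that the associated set $\mathcal{D}_1 \subset B(G)^+$ is trivial, and then to mimic the Fourier--Stieltjes reduction used in the proof of Theorem~\ref{w*} but invoke Theorem~\ref{jrw} in place of Theorem~\ref{Muk}.

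First, since $C^*(G) = \mathrm{span}_\mathbb{C} C^*(G)^+$, by linearity it suffices to treat $0 \leq x \in C^*(G)$. The map $T$ is a positive contraction on $C^*(G)$ because $\rho$ is a state, so Lemma~\ref{der} yields
\[
\lim_n \|\rho^n \cdot x\| \;=\; \sup\{\,\langle x, \nu\rangle \,:\, \nu \in \mathcal{D}_1\,\},
\]
where $\mathcal{D}_1$ consists of those positive $\nu_0 \in B(G)$ of norm at most one admitting a sequence $\{\nu_n\}_{n \geq 0} \subset B(G)^+$ with $\|\nu_n\| \leq 1$ and $\rho\, \nu_{n+1} = \nu_n$. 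It thus suffices to prove $\mathcal{D}_1 = \{0\}$.

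Suppose toward a contradiction that there is $\nu_0 \in \mathcal{D}_1$ with $\nu_0 \neq 0$. Arguing exactly as in the proof of Theorem~\ref{w*}, the identity $\nu_0 = \rho^n \nu_n$ together with $\|\rho\|_\infty = 1$ forces each $\nu_k$ to be supported in $G_{\bar\rho} = \rho^{-1}(\mathbb T)$, so $G_{\bar\rho}$ is an open (hence non-discrete) subgroup of $G$ on which $\rho$ restricts to an injective character; in particular, $G_{\bar\rho}$ is abelian and $\widehat{G_{\bar\rho}}$ is non-compact. Under the identifications $\mathds{1}_{G_{\bar\rho}} \cdot B(G) \cong B(G_{\bar\rho}) \cong M(\widehat{G_{\bar\rho}})$ via $\mathcal{F}_s$, the relation $\rho\, \nu_{n+1} = \nu_n$ becomes $\mu \star \sigma_{n+1} = \sigma_n$ in $M(\widehat{G_{\bar\rho}})$, for positive measures $\sigma_n$ of total mass at most one with $\sigma_0 \neq 0$, where $\mu := \mathcal{F}_s(\mathds{1}_{G_{\bar\rho}}\, \rho)$ is a probability measure on $\widehat{G_{\bar\rho}}$.

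The crux of the proof is an irreducibility-transfer statement in the spirit of \cite[Proposition~2.1]{NR}: one must show that irreducibility of $\rho \in P_1(G)$ yields irreducibility of $\mu$ on $\widehat{G_{\bar\rho}}$. This is the main technical obstacle. Granted it, Theorem~\ref{jrw} applied to the non-compact abelian $\widehat{G_{\bar\rho}}$ gives $\|\mu^n \star f\|_\infty \to 0$ for every $f \in C_0(\widehat{G_{\bar\rho}})$. Iterating $\sigma_0 = \mu^n \star \sigma_n$, then for each $f \in C_0(\widehat{G_{\bar\rho}})$,
\[
|\langle f, \sigma_0 \rangle| \;=\; |\langle \mu^n \star f, \sigma_n \rangle| \;\leq\; \|\mu^n \star f\|_\infty\, \|\sigma_n\| \;\longrightarrow\; 0,
\]
forcing $\sigma_0 = 0$, the desired contradiction. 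Apart from the irreducibility transfer, the argument is a direct parallel of Theorem~\ref{w*}, with Lemma~\ref{der} bridging asymptotic norm decay and the structure of backward chains in $B(G)$.
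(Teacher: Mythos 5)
Your proposal follows essentially the same route as the paper: Lemma~\ref{der} applied to $x\mapsto\rho\cdot x$ reduces the claim to the triviality of the backward-chain set $\mathcal{D}_1$, the support argument from Theorem~\ref{w*} localizes everything to the open abelian subgroup $G_{\bar\rho}$, and Theorem~\ref{jrw} on the non-compact dual $\widehat{G_{\bar\rho}}$ kills the resulting chain $\mu\star\sigma_{n+1}=\sigma_n$ exactly as in the paper's displayed estimate. The one step you flag as the ``main technical obstacle'' --- transferring irreducibility from $\rho$ to $\mu=\mathcal{F}_s(\mathds{1}_{G_{\bar\rho}}\,\rho)$ --- is left implicit in the paper as well and is not a serious obstruction: openness of $G_{\bar\rho}$ gives an embedding $C^*(G_{\bar\rho})\hookrightarrow C^*(G)$ under which $\langle\, y\,,\,(\mathds{1}_{G_{\bar\rho}}\rho)^n\,\rangle=\langle\, y\,,\,\rho^n\,\rangle$ for positive $y\in C^*(G_{\bar\rho})$, so irreducibility of $\rho$ passes to $\rho|_{G_{\bar\rho}}$ and hence, via the dual characterization of irreducibility (as in \cite{NR}, \cite{KNR}), to $\mu$.
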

\begin{proof}
Let $\mathcal{D}_1$ be the set of all $\nu\in B(G)$ such that there exists
a sequence $\displaystyle \{\nu_k\}_{k=0}^\infty$ in $B(G)^+$ such that $\nu_0 = \nu$, $\|\nu_k\| \leq 1$, and
$\rho\,\nu_{k+1}\,=\,\nu_k$ for all $k\geq 0$.
Then similarly to the proof of Theorem \ref{w*} 
we can show support$(\nu_k)  \subseteq\G_{\bar\rho}$ for all $k\geq 0$,
and hence $G_{\bar\rho} = \rho^{-1}(\mathbb T)$ is open in $G$.
Again, similarly to the proof of Theorem \ref{w*},
we obtain via the Fourier--Stieltjes transform, the sequence $\{\mathcal{F}_s(\mathds{1}_{G_{\bar\rho}} \nu_k)\}$
in the measure algebra $\big(\, M({G_{\bar\rho}})\,,\,\star \,\big)$
such that $\|\mathcal{F}_s(\mathds{1}_{G_{\bar\rho}} \nu_k)\| \leq 1$, and
\[
\mathcal{F}_s(\mathds{1}_{G_{\bar\rho}} \rho)\, \star \, \mathcal{F}_s(\mathds{1}_{G_{\bar\rho}} \nu_{k+1})
\,=\,
\mathcal{F}_s(\mathds{1}_{G_{\bar\rho}} \rho \, \nu_{k+1})
\,=\,
\mathcal{F}_s(\mathds{1}_{G_{\bar\rho}} \nu_k)
\]
for all $k\geq 0$. Therefore we have
\begin{eqnarray*}
\abs{\,\la\, x \,,\, \nu_0\,\rangle\,} & = &
\abs{\,\la\, \mathds{1}_{G_{\bar\rho}}\, x \,,\, \mathds{1}_{G_{\bar\rho}}\, \nu_0\,\rangle\,}\\ & = &
\abs{\,\la\, \mathcal{F}^{-1}(\mathds{1}_{G_{\bar\rho}}\, x) \,,\, \mathcal{F}^{-1}(\mathds{1}_{G_{\bar\rho}}\, \nu_0) \,\rangle\,}
\\ & \leq &
\lim_n\,\|\,\mathcal{F}^{-1}(\mathds{1}_{G_{\bar\rho}}\,\rho)^{\star n}\,\star\,\mathcal{F}^{-1}(\mathds{1}_{G_{\bar\rho}}\, x)\,\|
\hspace{2cm} \text{(By Lemma \ref{der})}\\
&=& 0 \hspace{7.28cm} \text{(By Theorem \ref{jrw})}\,.
\end{eqnarray*}
Then, once more applying Lemma \ref{der},
this time to the action of $\rho$ as a contraction on $C^*(G)$, we conclude that
\[
\lim_n\, \|\,\rho^n \cdot x\,\|\,=\, \sup\,\{\,\abs{\la\, x \,,\, \nu_0\,\rangle}\,:\,\nu_0\in\mathcal{D}_1\,\}
\,=\,0\,.
\]
\end{proof}

\begin{corollary}
Let $G$ be a non-discrete locally compact group,
and let $\rho\in B_r(G)$ be an irreducible positive definite function on $G$.
Then we have
\[
\lim_n\, \|\,\rho^n \cdot x\,\|\,=\,0
\]
for all $x\in C^*(G)$.
\end{corollary}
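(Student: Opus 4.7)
The plan is to derive this corollary from the preceding theorem by the same device used to pass from Theorem \ref{w*} to the first corollary, namely by exploiting that $B_r(G)$ is a subalgebra of $B(G)$ and that $C^*_r(G)$ is a quotient of $C^*(G)$. First I would observe that an irreducible positive definite $\rho \in B_r(G)$ is also an irreducible element of $P_1(G)$ when viewed inside $B(G)$: irreducibility is a statement about the closed semigroup generated by the support of $\rho$ in $G$, equivalently about $\rho$ as a continuous function, and this is insensitive to whether we regard $\rho$ as a member of $B_r(G)$ or of $B(G)$. Similarly, $B_r(G)$ is closed under pointwise multiplication, so $\rho^n \in B_r(G)$ for every $n$.

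Given this, the preceding theorem applies directly to $\rho \in B(G)$ and yields $\|\rho^n \cdot x\|_{C^*(G)} \to 0$ for every $x \in C^*(G)$, which already settles the statement when the norm is read as the $C^*(G)$-norm. To accommodate the reduced interpretation, I would check the equivariance $\pi(\rho \cdot x) = \rho \cdot \pi(x)$ for $\rho \in B_r(G)$, where $\pi: C^*(G) \twoheadrightarrow C^*_r(G)$ is the canonical quotient. This reduces to the observation that $B_r(G)$, being the annihilator of $\ker\pi$ inside $C^*(G)^* = B(G)$, is a $B(G)$-submodule, so the dual action of $\rho \in B_r(G)$ carries $\ker\pi$ into itself and descends to $C^*_r(G)$. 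It then follows that $\|\rho^n \cdot \pi(x)\|_{C^*_r(G)} = \|\pi(\rho^n \cdot x)\|_{C^*_r(G)} \leq \|\rho^n \cdot x\|_{C^*(G)} \to 0$, covering every element of $C^*_r(G)$.

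There is essentially no obstacle beyond unpacking these duality statements; the analytic content is already contained in the preceding theorem, and the corollary is a formal consequence of the interplay between the quotient $C^*(G) \to C^*_r(G)$ and the inclusion $B_r(G) \hookrightarrow B(G)$.
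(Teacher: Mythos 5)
Your proposal is correct and follows exactly the route the paper intends (the paper states this corollary without proof, relying on the same remark it made before the first corollary: $B_r(G)$ is a subalgebra of $B(G)$ and $C^*_r(G)$ is a quotient of $C^*(G)$). Noting that irreducibility and positive definiteness are intrinsic to $\rho$ as a function, applying the preceding theorem inside $B(G)$, and checking that the module action descends through the quotient is precisely the intended argument.
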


\bibliographystyle{plain}

\end{document}